\theoremstyle{plain}
\newtheorem{Thm}{Theorem}[section]
\newtheorem{Prop}[Thm]{Proposition}
\newtheorem{Conj}[Thm]{Conjecture}
\theoremstyle{definition}
\theoremstyle{remark}
\newtheorem{Rem}[Thm]{Remark}
\numberwithin{equation}{section}
\begin{document}
\title{An elementary semi-ampleness result \\ for log canonical divisors}
\author{\thanks{2010 \textit{Mathematics Subject Classification}: 14E30}
\thanks{\textit{Key words and phrases}: log canonical divisor, semi-ample} Shigetaka FUKUDA}

\date{\empty}

\maketitle \thispagestyle{empty}
\pagestyle{myheadings}
\markboth{Shigetaka FUKUDA}{Log canonical divisors}
\begin{abstract}
If the log canonical divisor on a projective variety with only Kawamata log terminal singularities is numerically equivalent to some semi-ample $\mathbf{Q}$-divisor, then it is semi-ample.
\end{abstract}

In this note, every algebraic variety is defined over the field $\mathbf{C}$ of complex numbers.
We follow the terminology and notation in \cite{Utah}.

\begin{Thm}[Main Theorem]
Let $(X,\Delta)$ be a projective variety with only Kawamata log terminal singularities.
Assume that the log canonical divisor $K_X + \Delta$ is numerically equivalent to some semi-ample $\mathbf{Q}$-Cartier $\mathbf{Q}$-divisor.
Then $K_X+\Delta$ is semi-ample.
\end{Thm}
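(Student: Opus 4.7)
Let $D$ denote the given semi-ample $\mathbf{Q}$-Cartier $\mathbf{Q}$-divisor with $K_X + \Delta \equiv D$. After replacing $D$ by a sufficiently divisible positive multiple and passing to the Stein factorization, one obtains an algebraic fiber space $f \colon X \to Y$ onto a normal projective variety $Y$, together with an ample $\mathbf{Q}$-divisor $H$ on $Y$, such that $D \sim_{\mathbf{Q}} f^{\ast} H$. If $\dim Y = \dim X$, then $D$ is big, hence so is the nef divisor $K_X + \Delta$, and the Kawamata--Shokurov base-point-free theorem concludes the proof. So I henceforth assume $\dim Y < \dim X$.

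The heart of the plan is to \emph{descend} $K_X + \Delta$ along $f$, meaning to produce a $\mathbf{Q}$-divisor $M$ on $Y$ with $K_X + \Delta \sim_{\mathbf{Q}} f^{\ast} M$. Granted this, the numerical equivalence $f^{\ast}(M - H) \equiv 0$ propagates to $M \equiv H$ on $Y$ by pulling back intersections against curves on $Y$ to curves in $X$ mapping onto them. Since $H$ is ample and ampleness is a numerical property by Nakai--Moishezon, $M$ is ample; hence $f^{\ast} M$ is semi-ample, and so is $K_X + \Delta$.

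To effect the descent, I would restrict to a general fiber $F$ of $f$. By generic smoothness, adjunction, and the standard Bertini-type arguments, $(F, \Delta|_F)$ is a projective klt pair with $K_F + \Delta|_F = (K_X + \Delta)|_F \equiv (f^{\ast} H)|_F = 0$. Invoking the theorem that a numerically trivial log canonical divisor on a projective klt pair is $\mathbf{Q}$-linearly trivial, one obtains $K_F + \Delta|_F \sim_{\mathbf{Q}} 0$. A spreading-out argument, combined with control over the discriminant locus (via a canonical bundle formula, say), then produces the required $M$ on all of $Y$.

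The main obstacle I foresee is the descent step, and specifically the appeal to abundance for numerically trivial log canonical divisors in the klt setting: this is a nontrivial input, absorbed from the literature. Handling non-generic fibers to extend $M$ across the whole of $Y$ is a secondary technical point, controllable via $f_{\ast}\mathcal{O}_X = \mathcal{O}_Y$ together with the canonical bundle formula. The remaining ingredients --- Stein factorization, the base-point-free theorem, and Nakai--Moishezon --- are comparatively routine bookkeeping.
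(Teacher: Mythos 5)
Your overall strategy coincides with the paper's: take the semi-ample fibration $f\colon X\to Y$ associated to $D$, observe that a general fiber $(F,\Delta|_F)$ is klt with $K_F+\Delta|_F$ numerically trivial, invoke the Kawamata--Nakayama theorem (Proposition \ref{Prop:KaN}) to upgrade this to $K_F+\Delta|_F\sim_{\mathbf{Q}}0$, descend $K_X+\Delta$ to $Y$, and conclude because the descended divisor is numerically equivalent to the ample $H$ and ampleness is numerical. The case $\dim Y=\dim X$ that you split off is subsumed in the general argument (the fiber is a point), so that fork is harmless but unnecessary.

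The gap is in the descent, which you characterize as ``a secondary technical point, controllable via $f_*\mathcal{O}_X=\mathcal{O}_Y$ together with the canonical bundle formula.'' This step is the second major theorem the whole proof rests on, not bookkeeping: given that $K_X+\Delta$ is $f$-nef and $\mathbf{Q}$-linearly trivial on the general fiber, producing $M$ with $m(K_X+\Delta)=f^*M$ is precisely the relative semi-ampleness theorem of Kawamata, Nakayama and Fujino (the paper's Proposition \ref{Prop:KNF}), whose proof runs through variations of Hodge structure and semipositivity of direct images. A naive spreading-out fails because triviality on general fibers only controls the horizontal part of $m(K_X+\Delta)-f^*(\cdot)$; eliminating the vertical discrepancy over the discriminant locus is exactly the hard content, and the Ambro/Fujino--Mori canonical bundle formula you gesture at is an input of the same depth, not a routine patch, so you must state precisely which theorem you are invoking. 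Moreover, even granting Proposition \ref{Prop:KNF}, it a priori yields only $f$-semi-ampleness, i.e.\ a factorization $X\to Z\to Y$ with $m(K_X+\Delta)$ pulled back from a divisor that is merely ample \emph{relative to} $Y$; the paper needs the additional observation that every $f$-contracted curve $C$ satisfies $(K_X+\Delta)\cdot C=\frac{1}{l}(f^*A)\cdot C=0$, hence is $g$-contracted, so that $Z\to Y$ is finite and birational and therefore the identity by Zariski's Main Theorem. Your write-up silently assumes the descent lands on $Y$ itself; that assertion requires this (short but genuine) argument.
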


\begin{Rem}
Divisors that are numerically equivalent to semi-ample $\mathbf{Q}$-divisors are nef.
So Main Theorem is a corollary of the famous log abundance conjecture for Kawamata log terminal pairs.
\end{Rem}

\begin{Rem}
After the earlier draft of the manuscript was written out, Campana-Koziarz-Paun (\cite{CKP}) showed that Main Theorem holds under the weaker condition that $K_X + \Delta$ is numerically equivalent to some nef and abundant $\mathbf{Q}$-Cartier $\mathbf{Q}$-divisor.
\end{Rem}

For proof we cite the following two results.
The first is the $\mathbf{Q}$-linear triviality (Proposition \ref{Prop:KaN}) due to Kawamata and Nakayama and the second is the relative semi-ampleness (Proposition \ref{Prop:KNF}) due to Kawamata, Nakayama and Fujino.

\begin{Prop}[\cite{Ka2} Theorem 8.2, \cite{Na04} Corollary V.4.9]\label{Prop:KaN}
Let $(X,\Delta)$ be a projective variety with only Kawamata log terminal singularities.
Assume that the log canonical divisor $K_X + \Delta$ is numerically trivial.
Then $K_X+\Delta$ is $\mathbf{Q}$-linearly trivial.
\end{Prop}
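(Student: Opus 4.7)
The plan is to split the argument into a soft reduction and a hard non-vanishing step.

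First I would observe that it suffices to establish $\kappa(K_X+\Delta) \geq 0$. Granting this, pick an effective $\mathbf{Q}$-divisor $D \sim_{\mathbf{Q}} m(K_X+\Delta)$ for some positive integer $m$. Since $K_X+\Delta \equiv 0$, also $D \equiv 0$; but for any ample divisor $H$ on the $n$-dimensional $X$, every prime component $D_i$ of $D$ satisfies $D_i \cdot H^{n-1} > 0$. Hence $D \cdot H^{n-1}=0$ together with effectivity of $D$ forces $D=0$, giving $m(K_X+\Delta) \sim_{\mathbf{Q}} 0$.

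The real content is thus non-vanishing $\kappa(K_X+\Delta)\geq 0$ for a klt pair whose log canonical class has numerical dimension zero. I would argue by induction on $n=\dim X$. A log resolution $\mu:Y\to X$ with $\mu^*(K_X+\Delta)=K_Y+B-A$ (where $A,B$ are effective $\mathbf{Q}$-divisors with no common components and $A$ is $\mu$-exceptional), together with the projection formula $\mu_*\mathcal{O}_Y(mA)=\mathcal{O}_X$ for $mA$ integral, gives $\kappa(K_Y+B)=\kappa(K_X+\Delta)$, so one may assume $X$ is smooth and $\Delta$ has simple normal crossing support. The base case $n=1$ is an elementary degree computation in which the klt constraint $\lfloor\Delta\rfloor=0$ forces $g(X)\leq 1$, and non-vanishing follows at once.

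For the inductive step, the strategy is to extract a nontrivial fibration $f:X\dashrightarrow Z$ with $\dim Z<n$ whenever possible. When the irregularity $q(X)$ is positive, the Albanese morphism provides one (using that $\mathrm{alb}:X\to\mathrm{Alb}(X)$ is surjective for klt pairs with $K_X+\Delta\equiv 0$, by generic semi-positivity), and the Fujino--Mori canonical bundle formula applied to its Stein factorization descends $K_X+\Delta$ to a lower-dimensional klt pair whose log canonical class is again numerically trivial; the inductive hypothesis then applies. In the rigid case, where $q(X)=0$ and no such fibration can be extracted, one invokes Nakayama's analysis of the $\sigma$-decomposition: for nef divisors of numerical dimension zero both the positive and negative parts are controlled, and combining asymptotic multiplier ideal estimates with Kawamata--Viehweg vanishing delivers $h^0(X,\lfloor m(K_X+\Delta)\rfloor)>0$ for sufficiently divisible $m$. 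This rigid case is the genuine obstacle and carries essentially all the technical weight of the proposition.
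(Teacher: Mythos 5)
Your soft reduction is fine: once $\kappa(K_X+\Delta)\geq 0$ is known, intersecting an effective numerically trivial representative with $H^{n-1}$ forces it to vanish, and the passage to a log smooth model is standard. Note also that the paper does not prove this proposition at all --- it is quoted from Kawamata and Nakayama, with a remark that Ambro reproves it via a canonical bundle formula and the Albanese morphism --- so your outline is in effect an attempt to reconstruct Ambro's route. The trouble is that you have located the difficulty in exactly the wrong place. For the statement actually being proved (genuinely numerically trivial, not merely $\nu=0$), the case $q=0$ is essentially trivial: on a smooth projective variety with $q=0$ one has $\mathrm{Pic}^0=0$, so a numerically trivial Cartier divisor is a torsion class in the (finitely generated) N\'eron--Severi group and is therefore already $\mathbf{Q}$-linearly trivial; no $\sigma$-decomposition, multiplier ideals, or vanishing theorems are needed. (You created the appearance of difficulty here by weakening the hypothesis to ``numerical dimension zero'' after the resolution step, which is unnecessary: it suffices to produce a section of the numerically trivial pullback $\mu^*(m(K_X+\Delta))$, since $mA\geq 0$.) Conversely, your ``vague invocation'' paragraph for $q=0$ is not an argument at all, so even on your own division of cases the hard case is unproved.

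The genuine gap, however, sits inside the step you treat as routine, the inductive descent when $q>0$. The Fujino--Mori formula gives $K_X+\Delta\sim_{\mathbf{Q}} f^*(K_Z+\Delta_Z+M_Z)$ with $\Delta_Z$ the effective discriminant and $M_Z$ the moduli part, which is only known to be a nef class; here it is numerically trivial, i.e.\ an element of $\mathrm{Pic}^0(Z)\otimes\mathbf{Q}$ plus torsion. To apply your inductive hypothesis to a klt pair on $Z$ you must replace $M_Z$ by an effective $\mathbf{Q}$-divisor in its $\mathbf{Q}$-linear equivalence class, and a numerically trivial class admits such a representative if and only if it is torsion --- which is precisely the statement you are trying to prove. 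The induction is therefore circular at its crucial joint; closing it is the actual content of Ambro's theorem that a numerically trivial moduli $b$-divisor of an lc-trivial fibration is torsion (proved via variations of Hodge structure), or of Kawamata's and Nakayama's positivity-of-direct-images arguments over the Albanese. Two smaller untreated points: the surjectivity of the Albanese map needs Qi Zhang's theorem for klt pairs with $-(K_X+\Delta)$ nef rather than ``generic semi-positivity,'' and your induction does not say what to do when the Albanese map is generically finite, in which case the fibration does not drop the dimension.
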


\begin{Rem}
Ambro (\cite{Am} Theorem 0.1) gives an alternative proof to Proposition \ref{Prop:KaN}, by providing some log canonical bundle formula and applying it to the Albanese morphism.
\end{Rem}

\begin{Rem}
In the statement of Main Theorem, Proposition \ref{Prop:KaN} gives the special case where $K_X + \Delta$ is numerically equivalent to the trivial divisor 0, which is, of course, semi-ample.
\end{Rem}

\begin{Prop}[\cite{Ka1} Theorem 6.1, \cite{Na} Theorem 5, \cite{Fu} Theorem 1.1]\label{Prop:KNF}
Let $(X, \Delta)$ be a projective variety with only Kawamata log terminal singularities and $f: X \to Y$ a surjective morphism of normal projective varieties with only connected fibers.
If $K_X + \Delta$ is $f$-nef and $(K_X + \Delta) \vert_F$ is semi-ample for a general fiber $F$ of $f$, then the log canonical divisor $K_X + \Delta$ is $f$-semi-ample.
\end{Prop}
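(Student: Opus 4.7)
The plan is to construct a relative Iitaka fibration $\phi: X \to W$ over $Y$ coming from the semi-ampleness of $(K_X+\Delta)|_F$, then apply Proposition \ref{Prop:KaN} to descend $K_X+\Delta$ to $W$, and finally appeal to a relative base point free theorem after equipping $W$ with a klt structure via the canonical bundle formula. For sufficiently divisible $m$, the semi-ampleness of $(K_X+\Delta)|_F$ on a general fiber $F$ gives a contraction $\phi_F: F \to W_F$ with $(K_X+\Delta)|_F = \phi_F^* A_F$ for some ample $\mathbf{Q}$-divisor $A_F$. Spreading this construction out over the function field of $Y$ and replacing $X$ by a suitable log resolution carrying a klt log structure to resolve indeterminacies, one obtains $\phi: X \to W$ with $f = g \circ \phi$ for some $g: W \to Y$; by construction the generic fiber of $\phi$ is a klt pair on which $K_X+\Delta$ is numerically trivial.

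For the descent step, Proposition \ref{Prop:KaN} applied to the generic fiber of $\phi$ yields that $K_X+\Delta$ is $\mathbf{Q}$-linearly trivial on that fiber. To upgrade this to a global factorization $K_X+\Delta \sim_{\mathbf{Q}} \phi^* D$ for some $\mathbf{Q}$-Cartier $\mathbf{Q}$-divisor $D$ on $W$, I would rule out $\phi$-vertical corrections using $f$-nefness: any effective $\phi$-vertical component contributing negatively over a point $w \in W$ would violate nefness on a suitable curve lying in the fiber of $f$ through $w$. This reduces the entire problem to showing that $D$ is $g$-semi-ample.

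For the base point free step, $D$ restricts to the ample $A_F$ on the generic fiber of $g$, making $D$ a $g$-big divisor; and since $\phi^* D = K_X+\Delta$ is $f$-nef, the projection formula applied to curves contracted by $g$ forces $D$ to be $g$-nef. Invoking the Fujino--Mori / Ambro canonical bundle formula, one equips $W$ with a klt boundary $\Delta_W$ such that $K_W+\Delta_W$ is $\mathbf{Q}$-linearly equivalent to $D$ up to controlled vertical corrections. The relative Kawamata--Shokurov base point free theorem applied to the klt pair $(W, \Delta_W)$ and the $g$-nef, $g$-big divisor $D$ then yields that $D$ is $g$-semi-ample, and pulling back gives that $K_X+\Delta$ is $f$-semi-ample.

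The principal obstacles are the descent (promoting $\mathbf{Q}$-linear triviality on the generic fiber of $\phi$ to a genuine global factorization, with all vertical corrections absorbed or ruled out by $f$-nefness) and the canonical bundle formula machinery invoked in the last step in order to transfer the klt condition from $(X,\Delta)$ to $(W,\Delta_W)$ and so to activate the base point free theorem on the base. These are precisely the deep technical ingredients supplied by the cited works of Kawamata, Nakayama, and Fujino.
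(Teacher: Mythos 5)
The paper itself offers no proof of this Proposition: it is imported verbatim from Kawamata (\cite{Ka1} Theorem 6.1), Nakayama (\cite{Na} Theorem 5) and Fujino (\cite{Fu} Theorem 1.1), so there is no internal argument to compare yours against. Measured against the proofs in those references, your outline has the correct architecture: pass to a birational model on which the relative Iitaka fibration $\phi\colon X'\to W$ of $K_X+\Delta$ over $Y$ becomes a morphism, descend $K_X+\Delta$ to a $\mathbf{Q}$-divisor $D$ on $W$ that is nef and big over $Y$, equip $W$ with a klt structure via the canonical bundle formula, and conclude with the relative base point free theorem. Two comments. First, invoking Proposition \ref{Prop:KaN} on the general fiber of $\phi$ is redundant: since $(K_X+\Delta)\vert_F=\phi_F^*A_F$ with $A_F$ ample, the restriction of $K_X+\Delta$ to a general fiber of $\phi$ is already $\mathbf{Q}$-linearly trivial (the pullback of an ample divisor restricted to a fiber over a point), not merely numerically trivial; this also spares you the delicate question of whether that fiber carries an honest klt pair structure after the log resolution.

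Second, and more substantively, the two steps you label ``principal obstacles'' are not side technicalities but the entire content of the cited theorems. The elimination of $\phi$-vertical discrepancies, so as to obtain a genuine factorization $\mu^*(K_X+\Delta)\sim_{\mathbf{Q}}\phi^*D$, requires the precise lemma that an effective $\phi$-degenerate divisor always meets some curve in a fiber negatively (the form used in \cite{Ka1} and \cite{Na}); your appeal to $f$-nefness is the right mechanism but is not yet an argument, since one must first normalize the vertical part to be effective and degenerate by modifying $D$ with divisors pulled back from $W$, including components lying over codimension-two loci. Likewise, producing a klt boundary $\Delta_W$ on $W$ with the moduli part nef, so that the base point free theorem applies to $D$ over $Y$, is exactly Ambro's theorem \cite{Am} (or Fujino--Mori). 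As written, your proposal is a faithful roadmap to the literature rather than a self-contained proof; within this paper, where the Proposition functions as a quoted black box, that is a fair account of where the result comes from, but it should not be mistaken for an independent derivation.
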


\begin{proof}[Proof of Main Theorem]
Let $D$ be a semi-ample $\mathbf{Q}$-Cartier $\mathbf{Q}$-divisor that is numerically equivalent to $K_X+\Delta$.
We consider the surjective morphism $f:X \to Y$ of normal projective varieties with only connected fibers, defined by the linear space $H^0(X,{\cal O}_X (lD))$ for a sufficiently large and divisible integer $l$.
Then $lD = f^* A$ for some ample divisor $A$ on $Y$.

The log canonical divisor $K_X+\Delta$ is $f$-nef.
Furthermore the pair $(F, \Delta \vert_F)$ is Kawamata log terminal and, from a Kawamata-Nakayama result (Proposition \ref{Prop:KaN}), the log canonical divisor $K_F + (\Delta \vert_F) = (K_X + \Delta) \vert_F$ is $\mathbf{Q}$-linearly trivial for a general fiber $F$ of $f$.

Thus a relative semi-ampleness result due to Kawamata-Nakayama-Fujino (Proposition \ref{Prop:KNF}) gives the surjective morphism $g:X \to Z$ of normal projective varieties with only connected fibers, defined by the sheaf $f_* {\cal O}_X (m(K_X + \Delta))$ for a sufficiently large and divisible integer $m$, with the structure morphism $h:Z \to Y$ such that $hg=f$.
Then $m(K_X + \Delta) = g^* B$ for some $h$-ample divisor $B$ on $Z$.

For a curve C on X, if f(C) is a point then also g(C) is a point, because $0 = m(f^* A,C) = m (l(K_X + \Delta),C) = l(m(K_X + \Delta),C) = l(g^* B,C)$.
Thus the morphism $h$ is birational and finite.
This means that h is the identity morphism by virtue of Zariski's Main Theorem.

Hence the divisors $mA$ and $lB$ are numerically equivalent to each other on $Y$, because $f^* (mA-lB)$ is numerically trivial on $X$.
Thus $lB$ is ample, from the fact that $mA$ is ample.
Consequently $K_X + \Delta$ is semi-ample.
\end{proof}

Finally, by relaxing the condition concerning singularities, we propose the following subconjecture towards the famous log abundance conjecture.

\begin{Conj}\label{Conj:LC}
Let $(X,\Delta)$ be a projective variety with only log canonical singularities.
Assume that the log canonical divisor $K_X + \Delta$ is numerically equivalent to some semi-ample $\mathbf{Q}$-Cartier $\mathbf{Q}$-divisor.
Then $K_X+\Delta$ is semi-ample.
\end{Conj}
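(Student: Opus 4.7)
The plan is to retrace the argument used for the Main Theorem and to pinpoint the inputs that must be strengthened when ``Kawamata log terminal'' is weakened to ``log canonical''.

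First I would pick a semi-ample $\mathbf{Q}$-Cartier $\mathbf{Q}$-divisor $D$ numerically equivalent to $K_X + \Delta$, take a sufficiently divisible integer $l$, and form the surjective morphism $f:X \to Y$ with connected fibers defined by $H^0(X, {\cal O}_X(lD))$; exactly as in the klt proof, $lD = f^* A$ for an ample divisor $A$ on $Y$, and $K_X + \Delta$ is automatically $f$-nef. This step carries over verbatim.

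The first genuine obstacle appears on a general fiber $F$: numerically $(K_X + \Delta)\vert_F \equiv 0$ and $(F, \Delta\vert_F)$ is log canonical, and one must conclude that $(K_X + \Delta)\vert_F$ is $\mathbf{Q}$-linearly trivial. This is the lc analogue of Proposition \ref{Prop:KaN}, i.e.\ numerical abundance for lc pairs with numerically trivial log canonical divisor, and it is itself a substantial open problem. A natural attack imitates Ambro's approach: study the Albanese morphism of a suitable log resolution of $(F, \Delta\vert_F)$ and apply a log canonical bundle formula that now has to accommodate the moduli part arising from the lc (non-klt) centres; alternatively one may try to run a $(K_F + \Delta\vert_F)$-MMP and reduce inductively via adjunction to the non-klt locus. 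In dimensions up to three the statement is known by work of Fujino and Gongyo, so Conjecture \ref{Conj:LC} can already be deduced unconditionally there.

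Granted this, the next step is to invoke an lc analogue of Proposition \ref{Prop:KNF} — Fujino has extended such relative semi-ampleness results to the log canonical setting in several important cases — to produce a surjective morphism $g:X \to Z$ with connected fibers and $m(K_X + \Delta) = g^* B$ for some divisor $B$ on $Z$ which is ample with respect to the induced morphism $h:Z \to Y$ satisfying $hg = f$. Once both inputs are in place, the tail of the argument is completely formal and identical to the klt proof: for every $f$-vertical curve $C$ the chain $0 = (mf^*A, C) = lm(K_X + \Delta, C) = (lg^*B, C)$ forces $C$ to be $g$-vertical, so $h$ is birational and finite, hence the identity by Zariski's Main Theorem, whence $mA \equiv lB$ on $Y$, $lB$ is ample, and $K_X + \Delta$ is semi-ample on $X$. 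The entire difficulty is therefore concentrated in the two fibration-theoretic inputs above, and among them the lc version of Proposition \ref{Prop:KaN} is the deeper and more essential obstruction.
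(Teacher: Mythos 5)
The statement you have been asked to prove is presented in the paper as a \emph{Conjecture}, and the paper gives no proof of it; the surrounding remarks record only partial results (Kawamata's numerically trivial simple normal crossing case and Gongyo's proof in dimension at most four). Your proposal does not close this gap: it is a reduction of Conjecture \ref{Conj:LC} to two unproven inputs, not a proof. The more serious input, the log canonical analogue of Proposition \ref{Prop:KaN} applied to a general fiber $F$, is precisely the special case of Conjecture \ref{Conj:LC} in which the semi-ample divisor is $0$ (the paper makes the analogous observation in the klt setting), so as a proof strategy the argument is circular unless you supply an independent proof of numerically trivial abundance for lc pairs. The attacks you gesture at do not obviously go through: in Ambro's approach the moduli part of an lc-trivial fibration is only known to be nef (not semi-ample or torsion in the needed generality), and the MMP with scaling for lc pairs that your inductive alternative requires is not available in general. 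Likewise, Proposition \ref{Prop:KNF} is cited in the paper only for klt pairs, and you would need to verify that a log canonical extension of the relative semi-ampleness theorem holds in exactly the generality your argument uses; you assert this without a precise reference or hypothesis check.

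To be fair, the formal part of your argument --- the construction of $f$ from $H^0(X,{\cal O}_X(lD))$, the verticality computation showing every $f$-vertical curve is $g$-vertical, the conclusion that $h$ is finite and birational and hence the identity by Zariski's Main Theorem, and the descent of ampleness to conclude semi-ampleness --- is exactly the paper's proof of the Main Theorem and would work verbatim once the two inputs are granted. But since the paper states the result as an open conjecture precisely because those inputs were not available beyond low dimensions, your text should be presented as a conditional reduction, not as a proof.
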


\begin{Rem}
Kawamata's result (\cite{Ka3}) proves Conjecture \ref{Conj:LC} in the case where $\Delta$ is a reduced simple normal crossing divisor on a smooth variety $X$ and where $K_X + \Delta$ is numerically trivial.
\end{Rem}

\begin{Rem}
Recently Gongyo (\cite{Go}) proved Conjecture \ref{Conj:LC} in dimension $\leq 4$.
Moreover he extended Kawamata's result for the numerically trivial log canonical divisors $K_X + \Delta$ to the case of projective semi-log canonical pairs $(X, \Delta)$.
His proof depends on Proposition \ref{Prop:KaN}, the minimal model program (\cite{BCHM}) with scaling and the theory of semi-log canonical pairs (\cite{Fu00}) due to Fujino.
\end{Rem}

\noindent{\bf Acknowledgement}

\noindent
The author would like to thank Prof.\ O. Fujino and the referee for informing him of the relevant references and for kind advice to improve the presentation.

\bigskip
Faculty of Education, Gifu Shotoku Gakuen University

Yanaizu-cho, Gifu City, Gifu 501-6194, Japan

fukuda@ha.shotoku.ac.jp

\end{document}